\def\titlerunning#1{\gdef\titrun{#1}}
\def\author#1{\gdef\autrun{\def\and{\unskip, }#1}\gdef\@author{#1}}
\def\address#1{{\def\and{\\\hspace*{18pt}}\renewcommand{\thefootnote}{}%
\footnote {#1}}%
\markboth{\autrun}{\titrun}}
\def\email#1{e-mail: #1}
\def\subjclass#1{{\renewcommand{\thefootnote}{}%
\footnote{\emph{Mathematics Subject Classification (2010):} #1}}}
\def\keywords#1{\par\medskip
\noindent\textbf{Keywords.} #1}
\newtheorem{thm}{Theorem}[section]
\newtheorem{cor}[thm]{Corollary}
\newtheorem{lem}[thm]{Lemma}
\newtheorem{prop}[thm]{Proposition}
\newtheorem{letterthm}{Theorem}
\newtheorem{addendum}[letterthm]{Addendum}
\theoremstyle{definition}
\newtheorem{exa}[thm]{Example}
\numberwithin{equation}{section}
\def\G{\Gamma}
\def\wh{\widehat}
\def\Z{\mathbb{Z}}
\def\ns{\vartriangleleft}
\def\<{\langle}
\def\>{\rangle}
\def\a{\alpha}
\def\t{\tau}
\def\L{\Lambda}
\def\ad{{\rm{ad}}}
\def\aut{{\rm{Aut}}}
\def\S{\Sigma}
\def\B{\mathcal{B}}
\def\N{\mathbb{N}}
\def\longhook{{\lhook\joinrel\relbar\joinrel\rightarrow}}
\def\onto{\twoheadrightarrow}
\begin{document}


\baselineskip=17pt



\titlerunning{Strong profinite genus}

\title{The strong profinite genus of a finitely presented group can be infinite} 

\author{Martin R. Bridson} 

\date{Formatted 27 May 2014}

\maketitle

\centerline{\small{\em{For Fritz Grunewald, in memoriam}}}

\address{Martin R.~Bridson: Mathematical Institute,
Andrew Wiles Building, ROQ, Oxford University,
Oxford, OX2 6GG, United Kingdom;  \email{bridson@maths.ox.ac.uk}}

\subjclass{Primary 20E18, 20F65 ; Secondary 20F67}


\begin{abstract}{We construct the first examples of finitely-presented, residually-finite groups
$\G$ that contain an infinite sequence of non-isomorphic
finitely-presented subgroups $P_n\hookrightarrow\G$
such that the inclusion maps induce isomorphisms of profinite completions $\wh{P}_n\cong\wh\G$.} 

\keywords{Profinite completion, profinite genus, Grothendieck pairs}
\end{abstract}

\section{Introduction} 

The profinite completion of a group $\G$ is the inverse
limit of the directed system of
finite quotients of $\G$; it is denoted $\hat \G$. 
If $\G$ is residually finite then the natural
map $\G\to \hat \G$ is injective. In 1970 
Alexander Grothendieck \cite{gro}  
posed the following problem:  let $\G_1$ and $\G_2$ be
residually finite groups and
let $u:\G_1\to\G_2$ be a homomorphism such that the induced map
of profinite completions 
$\hat u :\hat\G_1\to\hat\G_2$ is an isomorphism; if $\G_1$ and $\G_2$ are finitely presented, must
$u$ be an isomorphism?  
This problem was settled in 2004 by Bridson and Grunewald \cite{BG} who showed that $u$ need not
be an isomorphism. 
(The corresponding result for finitely generated groups had been established earlier by Platonov and Tavgen \cite{PT}.) 
There has since
been a considerable amount of work exploring the extent to which   $\G_1$ can differ from $\G_2$,
but the existence of groups of the sort described in the following theorem has remained unknown.

\begin{letterthm}\label{main} There exists a finitely-presented residually-finite group $\G$ and a 
recursive sequence of finitely presented subgroups $u_n:P_n\hookrightarrow\G$ such that each of
the maps
$\wh{u}_n: \wh{P}_n\to\wh{\G}$ is an isomorphism, but $P_m\cong P_n$ if and only if $m=n$.
\end{letterthm}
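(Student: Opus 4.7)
The plan is to extend the Bridson--Grunewald construction (which resolved Grothendieck's original problem by producing a single Grothendieck pair of finitely presented groups) into a family version yielding infinitely many non-isomorphic subgroups of a common ambient group. The core ingredients should be Rips's construction, the 1-2-3 theorem of Baumslag--Bridson--Miller--Short (certifying finite presentability of fibre products), and the Platonov--Tavgen argument, which uses vanishing of $H_2(Q,\Z)$ together with the absence of non-trivial finite quotients of $Q$ to control the profinite completion of a fibre product inside its ambient direct product.

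Concretely, I would first build a recursive family $\{Q_n\}_{n\in\N}$ of pairwise non-isomorphic finitely presented perfect groups of type $\mathrm{F}_3$, each satisfying $H_2(Q_n,\Z)=0$ and having no non-trivial finite quotients; such a family can be assembled from variants of Higman's construction, arranged so that some robust invariant (say the Dehn function, or the rational Betti numbers in degree three) takes distinct values. Using SQ-universality of a fixed finitely presented torsion-free word-hyperbolic group $H$ (\emph{a la} Ol'shanskii / Belegradek--Osin), realise all the $Q_n$ simultaneously as quotients $\phi_n : H \twoheadrightarrow Q_n$, with recursive control on the presentations. Set $\G := H\times H$, which is finitely presented and residually finite, and for each $n$ define the fibre product
\[
P_n := \{(h_1,h_2)\in H\times H : \phi_n(h_1)=\phi_n(h_2)\} \leq \G.
\]
The 1-2-3 theorem guarantees that each $P_n$ is finitely presented, and the Platonov--Tavgen five-term sequence argument yields $\wh u_n : \wh P_n \cong \wh \G$.

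The heart of the argument, and the main obstacle, is to prove that the $P_n$ are pairwise non-isomorphic. Since they all share the profinite completion of $\G$, any distinguishing invariant must survive profinite equivalence. One natural route is to recover $Q_n$ intrinsically from $P_n$: setting $N_n:=\ker\phi_n$, we have $P_n/(N_n\times N_n)\cong Q_n$, so if $N_n\times N_n$ admits a characterisation purely in terms of the abstract group structure of $P_n$ (for example as a maximal normal subgroup with prescribed finiteness or centraliser-theoretic properties), then the non-isomorphism of the $Q_n$ transfers to the $P_n$. An alternative route is to use a geometric or homological invariant of $P_n$ -- Dehn function, higher $\ell^2$-Betti numbers, higher integral cohomology -- that can be read off from the corresponding invariant of $Q_n$. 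Either way, the delicate part is to engineer the family $\{Q_n\}$ so that the chosen invariant propagates cleanly to $P_n$, while keeping the ambient $\G$ fixed, finitely presented and independent of $n$; this is precisely where most naive attempts fail, since any invariant inherited from $\G$ rather than from $Q_n$ will be constant along the sequence.
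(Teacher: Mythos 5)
Your proposal captures the ambient scaffolding that the paper also uses — Rips construction, a 1-2-3 theorem for finite presentability, the Platonov--Tavgen criterion with $H_2(Q,\Z)=0$ and no finite quotients, and fibre products sitting inside a fixed direct product — but it stalls exactly at the two places where the paper introduces its new ideas, and the concrete route you sketch for bridging them would not work.

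First, you want a single finitely presented torsion-free hyperbolic $H$ with epimorphisms $\phi_n\colon H\twoheadrightarrow Q_n$ onto infinitely many pairwise non-isomorphic $Q_n$, obtained ``by SQ-universality.'' SQ-universality says every countable group \emph{embeds in a quotient} of $H$; it does not make the $Q_n$ themselves quotients of $H$, and in particular it gives no control over $\ker\phi_n$. But the (symmetric) 1-2-3 theorem requires $\ker\phi_n$ to be finitely generated, and that is precisely what the Rips construction is for — at the cost of producing a \emph{different} hyperbolic group for each $Q_n$, so your ambient $\G=H\times H$ cannot be held fixed this way. The paper sidesteps this by using a \emph{single} target $Q$ together with a self-epimorphism $\Psi\colon Q\twoheadrightarrow Q$ with non-trivial kernel ($Q$ non-Hopfian), setting $\pi_n=\pi_0\circ\Psi^n$. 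Only $\ker\pi_0$ needs to be finitely generated, which is exactly what Rips provides; the $\pi_n$ for $n>0$ have non-finitely-generated kernels, and this is why the \emph{asymmetric} 1-2-3 theorem from \cite{BHMS} (rather than the original symmetric one from \cite{BBMS}) is essential.

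Second, you correctly identify that the heart of the matter is proving $P_n\not\cong P_m$, but you leave it as a wish: ``engineer the $Q_n$ so that some invariant propagates cleanly to $P_n$.'' The paper does not distinguish the $P_n$ by any numerical or homological invariant (these tend to be swamped by the common ambient $\G\times\G$, as you yourself worry). Instead it proves a rigidity statement (Lemma \ref{l:fp}): because the factors are torsion-free non-elementary hyperbolic, a centraliser argument shows any isomorphism between two full subdirect products $P_n\to P_m$ extends to an automorphism $\Phi$ of $\G\times\G$. Then the Rips construction is tuned (via Lemma \ref{l:BH}) so that $\mathrm{Out}(\G)$ is finite, forcing $\Phi$ to have finite order in $\mathrm{Out}(\G\times\G)$; but $\Phi$ must carry $K_n=\ker\pi_0\times\ker\pi_n$ to $K_m$, and the chain $K_0\subsetneq K_1\subsetneq\cdots$ is strictly increasing, so Lemma \ref{l:grow} yields a contradiction unless $n=m$. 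This strict nesting of the kernels is exactly what the non-Hopfian self-map buys, and it is the mechanism that replaces your hoped-for invariant. Without an argument of this rigidity type, or a genuinely worked-out invariant that survives passage from $Q_n$ to $P_n$, the proposal has a gap at its decisive step.
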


The analogous result with the $P_n$ finitely generated was proved in Section 9 of \cite{BG} (cf.~\cite{PT}, \cite{BL}). 
The difficulties that prevented us from proving Theorem \ref{main} in \cite{BG} are 
overcome here using two new ideas. First, in place of the results from 
\cite{BBMS} used to establish the finite presentability of fibre
products in \cite{BG}, we use the Effective Asymmetric 1-2-3 Theorem 
proved in \cite{BHMS}; this breaking of symmetry is compatible with
the Platonov-Tavgen criterion for profinite equivalence \cite{PT} as distilled in \cite{BL} and \cite{BG}. The other new idea
is inspired by classical results concerning Nielsen equivalence and is described in two forms, the first of which involves the construction of
a particular type of non-Hopfian group (Section \ref{s:hopf}) and the second of which involves Nielsen equivalence more directly (Section \ref{s:niel}).
In each case, we construct a finitely presented group $Q$ that
admits epimorphisms $p_0:G\to Q$ and  $p_n:\Lambda\to Q\ (n\in\mathbb N)$ so that  the
fibre products $P_n<\G:=G\times \Lambda$ of $(p_0,p_n)$ satisfy Theorem \ref{main}.

Using recent work of Agol \cite{agol} and Wise \cite{wise} (alternatively \cite{HW}), one can arrange for $\G=G\times\Lambda$ to be the fundamental group of a 
special cube complex and hence deduce the following (see Section \ref{s:nilp}).

\begin{addendum}\label{addend}  One can assume that the group $\G$ in Theorem \ref{main} is 
residually torsion-free nilpotent, and each $u_n:P_n\to\G$ induces an isomorphism of 
pro-nilpotent completions.
\end{addendum}

Fritz Grunewald and I tried to prove Theorem \ref{main} when writing \cite{BG} but had insufficient tools at the time.
If Fritz were still alive, the present paper would surely have been a joint one. He is sorely missed.

\section{Asymmetric Fibre Products}

Given two epimorphisms $p_1:\G_1\to Q$ and $p_2:\G_2\to Q$, one has the {\em fibre product} 
$$
P=\{(x,y)\in\G_1\times\G_2 \mid p_1(x)=p_2(y)\}.
$$
The 1-2-3 Theorem of \cite{BBMS} gives conditions under which $P$ is finitely presentable. These
are too restrictive for our purposes but the following
refinement from \cite{BHMS} will serve us well.

\begin{thm}\label{t:123} There exists an algorithm that,
given the following data describing group homomorphisms
$f_i:\G_i\to Q\ (i=1,2)$, will output a finite presentation of
the fibre product $P$ of these maps, together with a
map $P\to\G_1\times\G_2$ defined on the generators, provided that both the $f_i$
are surjective and at least one of the
kernels $\ker f_i$ is finitely generated. (If either of these
conditions fails, the procedure will not halt.) {\bf Input:}
\begin{itemize}
\item A finite presentation $\mathcal Q\equiv \<X\mid R\>$ for $Q$.
\item A finite presentation $\<\underline a^{(i)}\mid \underline r^{(i)}\>$
for $\G_i\ (i=1,2)$.
\item $\forall a\in\underline a^{(i)}$, a word  $\tilde a$ in the free group on $X$
such that $\tilde a=f_i(a)$ in $Q$.
\item A finite set of identity sequences that generates $\pi_2\mathcal Q$
as a $\Z Q$-module.
\end{itemize}
\end{thm}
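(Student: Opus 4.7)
My plan is to make effective the proof of the (non-effective) Asymmetric 1-2-3 Theorem of \cite{BHMS}, which already supplies the generators, the three families of relations, and the argument that they suffice to present $P$. Without loss of generality I assume the finitely generated kernel is $\ker f_2$; the construction is symmetric in the two factors. The guiding observation is that in any finitely presented group, word equality is semi-decidable: enumerating consequences of the defining relators in the ambient free group eventually witnesses any true equation. So every piece of data appearing in the classical construction becomes effectively computable once termination is forced by the hypotheses.

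First I would produce a finite generating set for $P$. For each $a\in\underline a^{(1)}$, enumerate words $\beta$ in the free group on $\underline a^{(2)}$; using the data $\tilde b$ one can rewrite $f_2(\beta)$ as a word in $X$, and I accept the first $\beta$ witnessed to satisfy $f_2(\beta)=\tilde a$ in $Q$. Surjectivity of $f_2$ forces this search to halt. Set $s_a=(a,\beta_a)\in P$. In parallel, enumerate words $k$ in $\underline a^{(2)}$ whose image in $Q$ is witnessed to be trivial, adding each to a growing list $k_1,\dots,k_m$ of candidate elements of $K:=1\times\ker f_2$.

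Next I would assemble a candidate relation set. For each relator $r\in\underline r^{(1)}$, the element $r(\underline s)\in P$ lies in $K$ (its first coordinate vanishes), and I search for a word $w_r$ in the $k_j$ witnessed to equal $r(\underline s)$ in $\G_2$, yielding a relation $r(\underline s)=w_r(\underline k)$. For each pair $(a,k_j)$ a conjugation relation $s_a k_j s_a^{-1}=v_{a,j}(\underline k)$ is produced analogously. Each input identity sequence generating $\pi_2\mathcal{Q}$ as a $\Z Q$-module is converted into a relation on the $k_j$ by the geometric recipe from \cite{BHMS}. I would rerun this assembly after each new $k_j$ is enumerated and halt as soon as every required $w_r$ and $v_{a,j}$ has been found. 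The map $P\to\G_1\times\G_2$ on generators is given by $s_a\mapsto(a,\beta_a)$ and $k_j\mapsto(1,k_j)$; that the resulting presentation actually presents $P$ is the conclusion of the non-effective Asymmetric 1-2-3 Theorem.

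The main obstacle is the absence of any decidable word problem: one cannot positively verify that a chosen finite subset of $K$ generates it, so the enumeration of the $k_j$ is blind. What rescues the procedure is that the assembly requires the $k_j$ only to exhaust the finitely many $K$-elements appearing in the three relation families; finite generation of $\ker f_2$, together with finiteness of the input $\pi_2$-data, ensures this exhaustion occurs in finitely many steps and triggers halt. If these hypotheses fail the procedure loops forever, exactly as the theorem permits.
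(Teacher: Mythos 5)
Note first that the paper offers no proof of Theorem~\ref{t:123}: it is quoted from \cite{BHMS} as a black box, so there is no in-paper argument to compare your proposal against. Judged on its own, the proposal has a genuine gap in the halting criterion. You build a growing list $k_1,k_2,\ldots$ of witnessed elements of $K=\ker f_2$ and halt once every $r(\underline s)$ and every $s_ak_js_a^{-1}$ has been expressed as a word in the current $k_j$. But that can happen well before $\{k_j\}$ generates $\ker f_2$, in which case the output presents a proper subgroup of $P$ rather than $P$. Concretely, take $\G_1=\langle a\rangle\cong\Z$, $\G_2=\langle b,c\rangle$ free of rank two, and $Q$ trivial, so $P=\G_1\times\G_2\cong\Z\times F_2$. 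There are no relators $r$ to handle; if the first kernel element you enumerate is $k_1=b$, then with $s_a=(a,1)$ the single conjugation $s_ak_1s_a^{-1}=k_1$ is witnessed immediately and your procedure halts, outputting $\langle s_a,k_1\mid[s_a,k_1]\rangle\cong\Z^2\ne P$.

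Your remark that ``one cannot positively verify that a chosen finite subset of $K$ generates it'' is false, and supplying exactly this verification is the crux of making the 1-2-3 Theorem effective. From the input one can effectively produce a finite \emph{normal} generating set $n_1,\ldots,n_p$ for $\ker f_2$ in $\G_2$: search (successfully, by surjectivity) for words $\hat x$ in the free group on $\underline a^{(2)}$ with $f_2(\hat x)=x$ in $Q$ for each $x\in X$, then take the images in $\G_2$ of the rewritten relators $\rho(\hat x)$ for $\rho\in R$ together with the words $a\cdot\widetilde a(\hat x)^{-1}$ for $a\in\underline a^{(2)}$. To certify that $k_1,\ldots,k_N$ generate $\ker f_2$, search semi-decidably in $\G_2$ for witnesses that each $n_i$, and each $b^{\pm1}k_jb^{\mp1}$ for $b\in\underline a^{(2)}$ and $j\le N$, lies in $\langle k_1,\ldots,k_N\rangle$; this proves $\langle k_j\rangle$ is normal in $\G_2$ and contains a normal generating set of $\ker f_2$, hence equals it. This search terminates precisely when $\ker f_2$ is finitely generated, which is the hypothesis; a dovetail over $N$ turns it into an honest algorithm. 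Only after such a certificate is produced may you safely assemble the $w_r$, the $v_{a,j}$ and the $\pi_2$-relations and invoke the non-effective theorem to conclude that the output presents $P$.
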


We shall only need this theorem in the case where $\mathcal Q$ is aspherical, i.e. $\pi_2\mathcal Q=0$,
in which case the algorithm simplifies considerably. The algorithmic nature of the construction is needed
to justify the word ``recursive" in the statement of Theorem \ref{main}.

The above theorem allows us to present fibre products. We shall use it in combination with the
following criterion for proving that the inclusion of certain fibre products induce isomorphisms of
profinite completions.
This criterion is  essentially due to Platonov and Tavgen \cite{PT}. They dealt only with the
case $G_1=G_2$ and $p_1=p_2$, but the distillation of their argument described in Section 5 of \cite{BG} 
applies directly to the asymmetric case.

\begin{thm}\label{PT}
Let $p_1:G_1\to Q$ and $p_2:G_2\to Q$ be epimorphisms and
let $P\subset G_1\times G_2$ be the associated fibre product.
If $G_1$ and $G_2$ are finitely generated, $Q$ has no finite quotients,
and $H_2(Q,\mathbb Z) =0$, then the inclusion $u:P\hookrightarrow
G_1\times G_2$ induces
an isomorphism $\hat u: \hat P\to \hat G_1\times\hat G_2$.
\end{thm}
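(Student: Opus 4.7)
I would prove this in two parts: first the surjectivity of $\hat u$, then its injectivity.

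For surjectivity, the key is that $P$ is the kernel of the ``anti-diagonal'' epimorphism $G_1\times G_2\twoheadrightarrow Q$ given by $(x,y)\mapsto p_1(x)p_2(y)^{-1}$, so $P$ is normal in $G_1\times G_2$ with quotient $Q$. For any finite quotient $\phi:G_1\times G_2\twoheadrightarrow F$, the group $F/\phi(P)$ is a finite quotient of $Q$ and hence trivial; thus $\phi(P)=F$. In other words $P$ surjects onto every finite quotient of $G_1\times G_2$, which is precisely surjectivity of $\hat u$.

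For injectivity the goal is: given a finite-index normal subgroup $N\triangleleft P$, to produce a finite-index $M\triangleleft G_1\times G_2$ with $M\cap P\subseteq N$. The standard reduction (replacing $N$ by a characteristic subgroup of $P$ of finite index contained in $N$) lets one assume that $N$ is also normal in $G_1\times G_2$. Writing $A=P/N$ and $E=(G_1\times G_2)/N$, one obtains an extension
\[
1\to A\to E\to Q\to 1.
\]
The conjugation action of $Q$ on $A$ is a homomorphism into the finite group $\mathrm{Aut}(A)$, so it is trivial because $Q$ has no nontrivial finite quotients. Hence $A$ lies in the centre of $E$, and in particular $A$ is abelian.

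To split this central extension I would invoke the universal coefficient sequence
\[
0\to \mathrm{Ext}(H_1(Q;\Z),A)\to H^2(Q;A)\to \mathrm{Hom}(H_2(Q;\Z),A)\to 0.
\]
The right-hand term vanishes by hypothesis, and the left-hand term vanishes because $H_1(Q;\Z)=Q^{\mathrm{ab}}$ is a finitely generated abelian group (being a quotient of $G_1$) with no nontrivial finite quotient, hence trivial. Therefore $H^2(Q;A)=0$, the extension splits as a direct product $E\cong A\times Q$, and the copy of $Q$ inside $E$ is a normal subgroup of finite index $|A|$ meeting $A$ trivially. Its preimage in $G_1\times G_2$ is the desired $M$. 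I expect the most delicate point to be the reduction to $N\triangleleft G_1\times G_2$ (particularly if $P$ is not finitely generated, so that the passage to a characteristic subgroup of finite index in $P$ requires care), but the heart of the argument is the Platonov--Tavgen observation that the joint vanishing of $Q^{\mathrm{ab}}$ and $H_2(Q)$ forces every central extension of $Q$ by a finite group to split.
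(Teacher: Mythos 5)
The paper itself does not prove Theorem~\ref{PT}; it cites Platonov--Tavgen and the distillation in Section~5 of \cite{BG}, so your argument must stand on its own. It contains a genuine error that you should repair before comparing further: \emph{the fibre product $P$ is not, in general, a normal subgroup of $G_1\times G_2$}. The map $(x,y)\mapsto p_1(x)p_2(y)^{-1}$ is not a homomorphism unless $Q$ is abelian (check the product of two pairs -- you get $p_1(x_1)p_1(x_2)p_2(y_2)^{-1}p_2(y_1)^{-1}$, and the middle terms do not rearrange). Since in this paper $Q$ has no finite quotients and is built from Baumslag--Solitar amalgams, it is very far from abelian. This one false premise infects both halves of your proof.

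For surjectivity the fix is easy: the subgroup that actually is normal in $G_1\times G_2$ and sits inside $P$ is $K:=\ker p_1\times\ker p_2$, with $(G_1\times G_2)/K\cong Q\times Q$. Given a finite quotient $\phi:G_1\times G_2\onto F$, the group $F/\phi(K)$ is a finite quotient of $Q\times Q$, hence trivial, so $\phi(K)=F$ and a fortiori $\phi(P)=F$. Same conclusion, but with the correct normal subgroup.

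For injectivity the gaps are more serious. First, because $P$ need not be finitely generated, a finite-index $N\ns P$ need not contain a finite-index characteristic subgroup, so the reduction you flag as ``delicate'' is not merely delicate -- it is unavailable as stated. Second, and more fundamentally, even if $N$ were normal in $G_1\times G_2$, the sequence $1\to P/N\to (G_1\times G_2)/N\to Q\to 1$ you write down is not an extension, because $P/N$ is not normal in $(G_1\times G_2)/N$: again, $P$ is not normal in $G_1\times G_2$. The correct route (this is the Platonov--Tavgen distillation) is to work with the genuinely normal pieces $L_1=\ker p_1\times 1$ and $L_2=1\times\ker p_2$. Given a finite quotient $\phi:P\to A$, the subgroups $D_i=\ker\phi\cap L_i$ are normal in all of $G_1\times G_2$ (the conjugation action of $P$ on $L_i$ factors through the surjection $P\onto G_i$), so one may pass to $G_i/D_i$. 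After this reduction the images $\bar L_i=L_i/D_i\cong\phi(L_i)$ are finite normal subgroups of $\bar G_i$ with $\bar G_i/\bar L_i\cong Q$; since $Q$ has no finite quotients the conjugation map $Q\to\aut(\bar L_i)$ is trivial, so each $\bar L_i$ is central and, as you correctly observe, $H_1(Q)=H_2(Q)=0$ forces the central extension $1\to\bar L_i\to\bar G_i\to Q\to 1$ to split, giving $\bar G_i\cong\bar L_i\times Q$. Under these identifications $\bar P=\bar L_1\times\bar L_2\times\Delta(Q)$; any finite quotient of $\bar P$ kills $\Delta(Q)$ (again because $Q$ has no finite quotients), so $\phi$ factors through the projection $\bar G_1\times\bar G_2\onto\bar L_1\times\bar L_2$, which is the finite quotient of $G_1\times G_2$ you need. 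Your instinct that the heart of the matter is ``$Q^{\mathrm{ab}}=0$ and $H_2(Q)=0$ force central extensions to split'' is exactly right; the error is in how the extension is assembled, and it must go through the kernels $L_i$ rather than through $P$ itself.
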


\section{A Rips construction}

We assume that the reader is familiar with the theory of (Gromov) hyperbolic groups.
We shall use a version of the well-known Rips construction \cite{rips} to construct hyperbolic groups
with controlled properties. In the current setting we need to control the automorphisms of the groups constructed,
 and for this we appeal to the following lemma, the essence of which is taken from \cite{BH}.

\begin{lem}\label{l:BH} Let $\G$ be a torsion-free hyperbolic group 
and let $N\ns\G$ be a non-trivial subgroup that is finitely generated 
and normal. If ${\rm{Out}}(\G)$ is infinite, then $\G/N$ is virtually cyclic.
\end{lem}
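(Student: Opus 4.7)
The plan is to argue by contradiction: suppose $\mathrm{Out}(\G)$ is infinite while $\G/N$ is not virtually cyclic. Since $\G$ is torsion-free, if $\G$ were itself virtually cyclic then $\G\cong\Z$ and every quotient would be virtually cyclic, so I may assume $\G$ is non-elementary.

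The first step invokes the Bestvina--Paulin construction: the infinitude of $\mathrm{Out}(\G)$ yields, via an equivariant Gromov--Hausdorff limit of rescaled Cayley graphs twisted by a sequence of pairwise non-conjugate automorphisms, a non-trivial isometric action of $\G$ on an $\R$-tree without global fixed point and with virtually cyclic arc stabilisers. Torsion-freeness together with Rips' theorem (Bestvina--Feighn) then upgrades this to a non-trivial, minimal action of $\G$ on a simplicial tree $T$ with infinite cyclic edge stabilisers.

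Next I would analyse the restricted action of $N$ on $T$. The fixed-point subtree $T^N$ is $\G$-invariant because $N$ is normal: for $x\in T^N$, $n\in N$, $\g\in\G$ one has $n(\g x)=\g(\g^{-1}n\g)x=\g x$ since $\g^{-1}n\g\in N$. By minimality of the $\G$-action, $T^N$ is either empty or all of $T$. If $T^N=T$ then $N$ lies in the kernel $K$ of $\G\curvearrowright T$; but $K$ is contained in every (cyclic) edge stabiliser, so is cyclic and normal, and a non-trivial normal cyclic subgroup of a non-elementary torsion-free hyperbolic group cannot exist (its centraliser has index at most two and is itself cyclic), forcing $K=1$ and thus $N=1$, a contradiction. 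Hence $T^N$ is empty, and the unique minimal $N$-invariant subtree of $T$ is $\G$-invariant by the same normality argument, so it equals $T$ by minimality. Thus $N$ acts minimally on $T$ with subcyclic edge stabilisers.

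The decisive step is then to deduce from this pair of minimal actions on the same tree that $\G/N$ is virtually cyclic, and this is where the main obstacle lies. The group $\G/N$ acts on the quotient graph $N\backslash T$, which is finite by an accessibility argument using that $N$ is finitely generated and the edge stabilisers are cyclic; one then exploits torsion-freeness of $\G$ and cyclicity of centralisers of non-trivial elements to rigidify the vertex and edge stabilisers of the $\G/N$-action on $N\backslash T$ and conclude that a finite-index subgroup of $\G/N$ is infinite cyclic. Extracting this virtual cyclicity from the combinatorial data of the two simultaneous actions is the content of \cite{BH} being invoked here; everything preceding is a routine unpacking of the Bestvina--Paulin--Rips machinery.
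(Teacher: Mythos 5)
Your proof follows essentially the same route as the paper: Bestvina--Paulin plus Rips' theory of $\R$-tree actions yields a simplicial tree action of $\G$ with cyclic arc stabilizers, and the structural analysis of the finitely generated normal subgroup $N$ acting on this tree is deferred to Bridson--Howie \cite{BH}. The paper invokes the cleaner dichotomy of \cite{BH}, Proposition~2.2 (either $N$ lies in an edge stabilizer $A$, which is ruled out since $\G\not\cong\Z$, or $NA$ has finite index in $\G$, so $\G/N$ is commensurable with a cyclic quotient of $A$), whereas your sketch of the final extraction of virtual cyclicity from the quotient graph $N\backslash T$ is vaguer, but the overall strategy is identical.
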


\begin{proof} It follows from Rips's theory of group actions on $\mathbb R$-trees
and Paulin's Theorem \cite{paul}, \cite{BSwar} that if ${\rm{Out}}(\G)$ is infinite then
$\G$ acts on a simplicial tree with cyclic arc stabilizers; see \cite{BF} Corollary 1.3. Let $A$ be the stabilizer of an edge in this tree.
Proposition 2.2 of \cite{BH} implies that either
$N$ is contained in $A$, or else $NA$ has finite index in $\G$. The first possibility cannot occur, because it would imply that $N$ was an infinite cyclic normal subgroup of $\G$, and the only torsion-free hyperbolic group with such a subgroup is $\Z$.
Thus $NA$ has finite index in $\G$ and $\G/N$ is commensurable with a quotient of $A$, which is cyclic.
\end{proof}

The original Rips construction was an algorithm that took as input a finite presentation $\mathcal Q$ for a group $Q$ and gave as output a 
small cancellation presentation for a group $\G$ and an epimorphism $\G\to Q$ with finitely generated kernel. There have since been many refinements
of this construction in which extra properties are imposed on $\G$. The most important of these from our point of view is Haglund and Wise's proof \cite{HW} that
one can require $\G$ to be {\em{virtually special}}. (An alternative proof of this can be obtained by combining Wise's results about 
cubulating small cancellation groups \cite{wise} with Agol's proof \cite{agol} that cubulated hyperbolic groups are virtually special.)
For us the key properties of virtually special groups are that they are residually-finite and (what is more) 
that each has a subgroup of finite index that is residually torsion-free-nilpotent \cite{DK}.
We summarize this discussion as follows: 

\begin{prop}\label{rips1}
There exists an algorithm that, given a 
finite group-presentation $\mathcal Q \equiv  \< X\mid R\>$
will construct a finite presentation $\mathcal P \equiv \< X\cup A\mid R'\cup V\>$
for a group $\G$ so that
\begin{enumerate}
\item $\G$ is torsion-free, hyperbolic and residually finite,
\item $N:=\<A\>$ is normal in $\G$,
\item $\G/N$ is isomorphic to the group with presentation $\mathcal Q$.
\item If $Q$ is not virtually cyclic, then ${\rm{Out}}(\G)$ is finite.
\item If $Q$ has no finite quotients, then one may assume that $\G$ is special; in particular it is residually torsion-free-nilpotent.
\end{enumerate} 
\end{prop}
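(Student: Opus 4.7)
The plan is to assemble the proposition from three standard ingredients: the Haglund--Wise version of the Rips construction for items (1)--(3), Lemma \ref{l:BH} for (4), and a passage to a finite-index special subgroup for (5).

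First I would feed $\mathcal Q\equiv\<X\mid R\>$ into the Rips algorithm as refined by Haglund and Wise \cite{HW}. Its output is a finite presentation $\mathcal P\equiv\<X\cup A\mid R'\cup V\>$ of a torsion-free hyperbolic, virtually special group $\G$, together with an epimorphism $\G\onto Q$ whose kernel $N$ is the (already normal) subgroup generated by $A$; here $N\ne 1$ because the Rips relators in $V$ only impose conjugation rules on the letters of $A$ and do not trivialise them. Virtually special groups are residually finite, so (1) holds; (2) and (3) are built into the construction.

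For (4), I would apply Lemma \ref{l:BH} in its contrapositive form. Since $N\ns\G$ is a non-trivial finitely generated normal subgroup of a torsion-free hyperbolic group, the lemma forces $\G/N\cong Q$ to be virtually cyclic whenever ${\rm{Out}}(\G)$ is infinite. Hence if $Q$ is not virtually cyclic, ${\rm{Out}}(\G)$ is finite.

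For (5), suppose $Q$ has no finite quotients. Then $Q$ has no proper subgroups of finite index, because the normal core of any such would furnish a non-trivial finite quotient. Pick a finite-index subgroup $\G_0\le\G$ that is special. The composite $\G_0\hookrightarrow\G\onto Q$ has image of finite index in $Q$, hence equals $Q$; its kernel $N_0:=\G_0\cap N$ is finitely generated, being of finite index in $N$. Thus $\G_0$ inherits (1)--(4) with $N_0$ in place of $N$, and being special is residually torsion-free-nilpotent by \cite{DK}. Re-expressing $\G_0$ in the required block form $\<X\cup A\mid R'\cup V\>$ via a Reidemeister--Schreier rewrite (enlarging $X$ and $A$ to powers that land in $\G_0$) produces the desired presentation. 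The step I expect to demand the most care is exactly this last bookkeeping: one must check that the Reidemeister--Schreier output can be massaged into the prescribed block shape while retaining the isomorphism $\G_0/N_0\cong Q$ on the nose. Everything else is a direct appeal to the cited theorems.
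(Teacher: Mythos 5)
Your proof follows exactly the same route as the paper: invoke the Haglund--Wise form of the Rips construction for items (1)--(3), apply Lemma \ref{l:BH} in contrapositive form for (4), and pass to a special finite-index subgroup $\G_0$ for (5), using the absence of finite quotients of $Q$ to see that $\G_0\onto Q$ remains onto and that $\G_0\cap N$ is still finitely generated. You are right to flag the presentation-rewriting step for $\G_0$ as the point requiring bookkeeping; the paper's proof passes over it silently, but it is handled exactly as you suggest.
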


\begin{proof} The only item that is not covered by the preceding discussion and  Lemma \ref{l:BH} is (5), where the phrase ``one may assume" needs
explaining. Rips's original construction gives a short exact sequence $1\to N\to \G \to Q\to 1$ satisfying items (1) to (4) but even with the
work of Agol and Wise in hand one knows only that $\G$ is {\em virtually} special. To remedy this, we pass to a subgroup of finite
index $\G_0<\G$ that is special. Since $Q$ has no finite quotients, $\G_0\to Q$ is still onto. And the kernel, being of finite index in $N$,
is still finitely generated. Thus we may replace $\G$ and $N$ by $\G_0$ and $\G_0\cap N$ preserving properties (1) to (4). 
\end{proof}

\section{Non-Hopfian groups with no finite quotients}\label{s:hopf}

A group $H$ is termed {\em{non-Hopfian}} if there is an epimorphism $H\twoheadrightarrow H$ with non-trivial
kernel. 
Let $$S=\< a, t \mid ta^2t^{-1}=a^3\>.$$  Famously, Baumslag and Solitar \cite{BS} recognised that this group
is non-Hopfian.

\begin{lem}\label{l:S} The given presentation of $S$ is aspherical, $S/\<\!\<t\>\!\>$ is trivial,
and $\psi:a\mapsto a^2, t\mapsto t$ defines an
epimorphism with non-trivial kernel.
\end{lem}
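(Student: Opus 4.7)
The plan is to verify the three assertions in order, viewing $S$ as the HNN extension $\langle a\rangle*_{\langle a^2\rangle=\langle a^3\rangle}$ of $\mathbb{Z}$ with stable letter $t$. This structural perspective makes each claim accessible via standard combinatorial-group-theoretic tools.

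First I would handle asphericity. Since $S$ is a one-relator group with single relator $r = ta^2t^{-1}a^{-3}$, and $r$ is visibly not a proper power in the free group on $\{a,t\}$, Lyndon's identity theorem applies and yields that the presentation 2-complex is aspherical. (As an alternative, asphericity also follows from the fact that the standard presentation complex of any HNN extension of $\mathbb{Z}$ with infinite cyclic associated subgroups is a $K(\pi,1)$, obtained by gluing a cylinder to a circle along two covers.) Next, $S/\langle\!\langle t\rangle\!\rangle$ is computed by adjoining $t$ to the relators: the relation $ta^2t^{-1}=a^3$ collapses to $a^2=a^3$, so $a=1$, and the quotient is trivial.

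For the endomorphism $\psi$, the first step is to check that it is well defined, which amounts to verifying $t(a^2)^2t^{-1}=(a^2)^3$ in $S$; this follows by squaring the defining relation $ta^2t^{-1}=a^3$. Surjectivity is then immediate: $t=\psi(t)$, and since $\psi(a)=a^2$ and $\psi(tat^{-1})=ta^2t^{-1}=a^3$ lie in the image, so does $a=a^3\cdot(a^2)^{-1}$.

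The only substantive step is exhibiting a non-trivial element of $\ker\psi$. Following the classical Baumslag--Solitar observation, I would take $w=[tat^{-1},a]$ and note that $\psi(w)=[ta^2t^{-1},a^2]=[a^3,a^2]=1$. The one point requiring genuine argument is that $w\neq 1$ in $S$, and this is the main (mild) obstacle; I would dispatch it by Britton's lemma applied to the HNN structure. Writing $w=t\cdot a\cdot t^{-1}\cdot a\cdot t\cdot a^{-1}\cdot t^{-1}\cdot a^{-1}$, the only candidate pinches are $t^{-1}\!\cdot a\cdot t$ (which would require $a\in\langle a^3\rangle$) and $t\cdot a^{-1}\!\cdot t^{-1}$ (which would require $a^{-1}\in\langle a^2\rangle$); both fail, so the sequence is Britton-reduced and represents a non-trivial element of $S$. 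This completes the verification that $\psi$ has non-trivial kernel.
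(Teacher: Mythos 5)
Your proof is correct and follows the same route as the paper: asphericity via the one-relator/HNN structure, triviality of $S/\<\!\<t\>\!\>$ by Tietze collapse, surjectivity by exhibiting $a=a^3a^{-2}$ in the image, and non-triviality of the kernel element (your $[tat^{-1},a]$ is the inverse of the paper's $[a,tat^{-1}]$) by Britton's lemma. The paper simply omits the routine verifications you spell out; note only that $t\cdot a\cdot t^{-1}$ at the front of your word is a third candidate pinch, which fails for the same reason as the one you do check.
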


\begin{proof} To see that $\psi$ is onto, observe that $t$ and $a= a^3a^{-2} = ta^2t^{-1}a^{-2}$ are in the image.
Britton's lemma assures us that $c:=[a,tat^{-1}]\in\ker\psi$ is  non-trivial. It is obvious that
$S/\<\!\<t\>\!\>$ is trivial. A standard topological argument shows that the natural presentations of HNN extensions of free groups are 
aspherical (cf. \cite{BG}, p.364).  Alternatively, we can appeal to the fact that 1-relator presentations where the relation is
not a proper power are aspherical.
 \end{proof}

The group $S$ belongs to the family of groups considered in section 4.2 of \cite{BG}, where it is proved that 
a certain amalgamated free product $B=S_1\ast_L S_2$ has no non-trivial finite quotients. Here, $S_1$ and $S_2$
are isomorphic copies of $S$ (with subscripts to distinguish them),  $L$ is a free
group of rank $2$, and the amalgamation makes the identification $c_1=t_2$ and $t_1=c_2$, where $c=[a,tat^{-1}]$, as above.
Thus $B$ admits the following {\em aspherical} presentation:
$$
B = \<a_1, t_1, a_2, t_2 \mid t_1a_1^2t_1^{-1}a_1^{-3},\, t_2a_1^2t_2^{-1}a_2^{-3},\, t_2^{-1}[a_1,t_1a_1t_1^{-1}],\, 
t_1^{-1}[a_2,t_2a_2t_2^{-1}]\>.
$$
The  features of $B$ that we need in this section are the following, which are established in \cite{BG}, p.365. (Other
features will be used in Section \ref{s:niel}.)
A finite presentation of a group is termed {\em{balanced}} if it has the same number of generators as relations.

\begin{lem}\label{l:B} $B$ is an infinite group that has a balanced, aspherical presentation and
 no non-trivial finite quotients. In particular, $B$ is torsion-free and $H_1(B,\Z)=H_2(B,\Z)=0$.
\end{lem}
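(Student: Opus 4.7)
The plan is to verify in turn the four structural claims (balanced, infinite, aspherical, no non-trivial finite quotients) and then read off the three ``in particular'' consequences. Counting generators and relations shows the displayed presentation is balanced, with four of each. Infiniteness is immediate from the amalgamated-product structure $B = S_1 \ast_L S_2$, since $B$ contains $S_1 \cong S$, which is infinite.

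For asphericity I would view the displayed balanced presentation as the endpoint of a Tietze reduction of the natural amalgamated-product presentation: take the aspherical 2-complexes $K_1, K_2$ for $S_1, S_2$ supplied by Lemma \ref{l:S}, and glue them along a $K(L,1) = S^1 \vee S^1$ using the two $\pi_1$-injective inclusions $L \hookrightarrow S_i$ sending the free generators of $L$ to $\{t_i, c_i\}$. The fact that $\langle t, c\rangle$ is free of rank $2$ inside $S$ is a routine Britton-normal-form check in the HNN structure of $S$ and should be handled as a side lemma. A standard graph-of-spaces argument then shows that the glued complex is a $K(B,1)$, and collapsing the two superfluous 1-cells of $K(L,1)$ against the gluing relations that identify them with $t_1$ and $t_2$ is a homotopy equivalence producing precisely the displayed complex.

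The heart of the argument is showing that $B$ has no non-trivial finite quotient. Let $\phi: B \to F$ be a homomorphism into a finite group and fix $i \in \{1,2\}$. Because $S_i$ is finitely generated there are only finitely many homomorphisms $S_i \to F$, so pigeonhole applied to the sequence $\phi|_{S_i}, \phi|_{S_i} \circ \psi_i, \phi|_{S_i} \circ \psi_i^2, \dots$ (where $\psi_i$ is the surjective endomorphism of $S_i$ from Lemma \ref{l:S}) yields integers $0 \le k < \ell$ with $\phi|_{S_i} \circ \psi_i^{k} = \phi|_{S_i} \circ \psi_i^{\ell}$. Using surjectivity of $\psi_i$ to write an arbitrary $y \in S_i$ as $\psi_i^{k}(x)$, this gives $\phi|_{S_i}(y) = \phi|_{S_i}(\psi_i^{\ell-k}(y))$ for every $y$; taking $y = c_i$ and using $\psi_i(c_i) = 1$ forces $\phi(c_i) = 1$. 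The amalgamation relations now force $\phi(t_1) = \phi(c_2) = 1$ and $\phi(t_2) = \phi(c_1) = 1$, and since $S/\langle\langle t\rangle\rangle = 1$ by Lemma \ref{l:S}, each $\phi(S_i) = 1$. As $B$ is generated by $S_1 \cup S_2$, $\phi$ is trivial.

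The ``in particular'' statements now follow quickly. Torsion-freeness passes from $S$ (torsion-free as an HNN extension of $\langle a\rangle \cong \Z$) to $B$ via the standard Bass--Serre fact that amalgamated products of torsion-free groups over a torsion-free subgroup are torsion-free. The abelianization $H_1(B, \Z)$ is a finitely generated abelian group with no non-trivial finite quotient, hence vanishes. Finally, by asphericity the displayed presentation 2-complex $K$ is a $K(B,1)$, and its cellular chain complex $0 \to \Z^4 \xrightarrow{\partial_2} \Z^4 \xrightarrow{0} \Z \to 0$ computes $H_*(B, \Z)$; the vanishing of $H_1$ forces $\partial_2$ to be surjective between free abelian groups of equal rank, hence an isomorphism, so $H_2(B, \Z) = \ker \partial_2 = 0$. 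I expect the main technical obstacle to be the bookkeeping in the Tietze/graph-of-spaces identification delivering asphericity of the displayed balanced presentation; the pigeonhole step that kills each $c_i$ in every finite quotient is the conceptually essential input.
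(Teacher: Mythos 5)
The paper does not actually prove Lemma~\ref{l:B}: it defers to \cite{BG}, p.~365, where these properties of $B$ are established, so you were reconstructing an argument the paper only cites. Your reconstruction is correct and is essentially the argument in \cite{BG}. The balanced count and the observation $B\supset S_1$ infinite are immediate; for asphericity you correctly isolate the needed side lemma that $\langle t,c\rangle$ is free of rank two in $S$ (this is what makes the amalgam along $L$ legitimate and the gluing $\pi_1$-injective), after which the double-mapping-cylinder argument together with the Tietze collapse of the two amalgamating generators yields the displayed four-generator, four-relator aspherical complex. The heart of the matter is the triviality of finite quotients, and your pigeonhole step -- finiteness of $\mathrm{Hom}(S_i,F)$ forces $\phi|_{S_i}=\phi|_{S_i}\circ\psi_i^{\ell-k}$ for some $\ell-k\ge 1$, hence $\phi(c_i)=\phi(\psi_i^{\ell-k}(c_i))=1$, after which the amalgamating identifications kill $t_1,t_2$ and $S/\langle\!\langle t\rangle\!\rangle=1$ finishes -- is exactly the mechanism exploited there. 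The ``in particular'' deductions are also right: torsion-freeness by Bass--Serre, $H_1=0$ because a nonzero finitely generated abelianization would have a nontrivial finite quotient, and $H_2=0$ because a surjective $\partial_2:\Z^4\to\Z^4$ between equal-rank free abelian groups is an isomorphism, which is precisely where the balanced hypothesis earns its keep.
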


\subsection{The group we seek}\label{ss:Q}

Let $B$ be a group satisfying the hypotheses of Lemma \ref{l:B}, fix an element of infinite order $b\in B$
and define 
$$
Q= S\ast_\Z B
$$
where the amalgamation indentifies $t\in S$ with $b\in B$.

\begin{prop}\label{p:Q} $Q$ is a non-Hopfian group that has a balanced, aspherical presentation and
 no non-trivial finite quotients. In particular, $Q$ is torsion-free and $H_1(Q,\Z)=H_2(Q,\Z)=0$.
 \end{prop}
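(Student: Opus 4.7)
My plan is to transfer the structural properties of $S$ and $B$ across the amalgamation, exploiting that the amalgamating cyclic subgroup is generated by $t\in S$ and $b\in B$. First I would produce a balanced aspherical presentation of $Q$ by taking the $2$-generator, $1$-relator presentation of $S$ from Lemma~\ref{l:S} together with the balanced $4$-generator, $4$-relator presentation of $B$ from Lemma~\ref{l:B}, and substituting the $B$-word $b$ for $t$ in the $S$-relator to reach a $5$-generator, $5$-relator presentation of $Q$. Asphericity follows from the standard principle that gluing two aspherical presentation $2$-complexes along a $\pi_1$-injective circle (here, a cylinder joining the $t$-loop in $K_S$ to the $b$-loop in $K_B$) yields an aspherical $2$-complex; since $t$ and $b$ have infinite order in their respective factors, this hypothesis is met. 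Consequently $\mathrm{cd}(Q)\le 2$, which forces $Q$ to be torsion-free.

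Next, every finite quotient of $Q$ is trivial: if $\phi\colon Q\to F$ with $F$ finite, then $\phi|_B=1$ by Lemma~\ref{l:B}, hence $\phi(t)=\phi(b)=1$, and Lemma~\ref{l:S} then forces $\phi|_S$ to factor through $S/\<\!\<t\>\!\>=1$, so $\phi=1$. For the homology I would apply Mayer--Vietoris to the amalgamated product. Lemma~\ref{l:B} gives $H_2(B)=0$, and $S$ has an aspherical $1$-relator $2$-complex with $\chi=0$ and $H_1(S)=\mathbb Z$ (abelianizing $ta^2t^{-1}a^{-3}=1$ kills $a$), so $H_2(S)=0$ as well. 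The map $H_1(\mathbb Z)\to H_1(S)\oplus H_1(B)=\mathbb Z\oplus 0$ sends the generator to the class of $t$, which generates $S^{\mathrm{ab}}$; this map is therefore an isomorphism, and the Mayer--Vietoris sequence collapses to give $H_2(Q)=H_1(Q)=0$.

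Finally, non-Hopficity comes from extending the Baumslag--Solitar self-map of $S$. Since $\psi(t)=t$ (Lemma~\ref{l:S}), the maps $\psi$ and $\mathrm{id}_B$ agree on the amalgamating subgroup $\<t\>=\<b\>$, so they glue to an endomorphism $\Psi\colon Q\to Q$. Surjectivity follows because $\psi$ is onto and $B$ lies in the image, and the element $c=[a,tat^{-1}]\in\ker\psi$ remains non-trivial in $Q$ because $S$ embeds in the amalgamated free product, so $\ker\Psi\neq 1$. The main obstacle I anticipate is the careful verification of asphericity of the combined $2$-complex; I would handle this via the topological gluing argument above, which is a standard manipulation for amalgamations of aspherical spaces over $\pi_1$-injective circles.
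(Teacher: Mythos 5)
Your proposal is correct and follows essentially the same route as the paper: a balanced aspherical presentation built from the amalgam structure, absence of finite quotients inherited from $B$ via $t=b$ and the fact that $S\subset\langle\!\langle t\rangle\!\rangle$, and non-Hopficity by extending $\psi$ across $B$ by the identity. Two small remarks. First, the graph-of-spaces gluing you invoke directly produces the $6$-generator, $6$-relator presentation $\langle a,t,X\mid ta^2t^{-1}a^{-3},\ t\beta,\ R\rangle$ that the paper actually uses (the cylinder contributes the relator $t\beta$ with $\beta=b^{-1}$); your further reduction to a $5$-generator presentation requires a Tietze elimination of $t$, which does preserve asphericity (it is a homotopy equivalence of presentation $2$-complexes) but is an extra step not supplied by the gluing principle itself, so it is cleaner to keep $t$ as a generator. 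Second, your Mayer--Vietoris computation of $H_1(Q)=H_2(Q)=0$ is a correct alternative to the paper's implicit argument, which instead deduces these vanishings formally from the balanced aspherical presentation together with the absence of finite quotients (so $\chi(Q)=1$ and $H_1(Q)=0$, forcing the free abelian group $H_2(Q)$ to vanish), exactly as for $B$ in Lemma~\ref{l:B}.
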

 
 \begin{proof} Let $B=\<X\mid R\>$ be a balanced aspherical presentation and let $\beta$ be a word in the generators that
 equals $b^{-1}\in B$. Then
 $$
 \mathcal{Q}\equiv \< a, t, X \mid ta^2t^{-1}a^{-3},\ t\beta,\ R\>
 $$
 is an aspherical balanced presentation for $Q$. In any finite quotient of $Q$, the image of $B$ is trivial, hence the image of $t=b$ 
 is trivial. And since $S$ is in the normal closure of $t$, it too has trivial image. 
 
 To see that $Q$ is non-Hopfian we consider
 the homomorphism $\Psi:Q\to Q$ whose restriction to $S$ is the epimorphism $\psi$ of Lemma \ref{l:S} and whose restriction to $B$
 is the identity: $\Psi$ is well-defined because $\psi(t)=t$ and $S\cap B=\<t\>$; it is onto because $S$ and $B$ lie in the image;
 and it has non-trivial kernel because $\psi$ does.
\end{proof}

\section{Isomorphisms between fibre products}

A subgroup $H < G_1\times G_2$ of a direct product is termed a {\em{sub-direct product}} if the coordinate projections
map it {\em onto} $G_1$ and $G_2$, and it is said to be {\em full} if both of the intersections $H\cap G_i$ are non-trivial.
The fibre product of any pair of epimorphisms $G_1\to Q$ and $G_2\to Q$ is a subdirect product, and it is full provided
both maps have non-trivial kernel. (All subdirect products of $G_1\times G_2$ arise in this way; see \cite{BM} p.362.)

\begin{lem}\label{l:fp} Let $\G_1$ and $\G_2$ be torsion-free, non-elementary hyperbolic groups, let $P, P' < \G_1\times\G_2$ be
full sub-direct products, let $N_i=P\cap\G_i$ and let $N_i'=P'\cap\G_i$. 
Then, every isomorphism $\phi:P\to P'$ maps $N_1\times N_2$ isomorphically onto $N_1'\times N_2'$ (sending the direct summands to
direct summands) and
extends uniquely to an isomorphism $\Phi:\G_1\times\G_2\to \G_1\times\G_2$. 
\end{lem}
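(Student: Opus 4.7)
The plan is to identify $N_1,N_2$ intrinsically inside $P$, and then to build $\Phi$ from the automorphisms of $\Gamma_1,\Gamma_2$ that $\phi$ induces on the quotients $P/N_{3-i}\cong\Gamma_i$. The essential input is the following fact about any torsion-free non-elementary hyperbolic group $\Gamma$: if $1\neq N\triangleleft\Gamma$ then $C_\Gamma(N)=1$. Indeed, centralizers of non-trivial elements in a torsion-free hyperbolic group are infinite cyclic and distinct maximal cyclic subgroups meet trivially; meanwhile $N$ is non-cyclic, because a non-trivial cyclic normal subgroup would force $\Gamma$ to be virtually cyclic, contradicting non-elementarity. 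So, picking non-commuting $n,m\in N$, any $g\in C_\Gamma(N)$ lies in $C_\Gamma(n)\cap C_\Gamma(m)=1$.

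I would then characterize $\{N_1,N_2\}$ as the set of maximal non-trivial normal subgroups $M\triangleleft P$ with $C_P(M)\neq 1$. For any $M\triangleleft P$, the image $\pi_i(M)$ is normal in $\Gamma_i$ (since each $\pi_i\colon P\to\Gamma_i$ is onto). If both $\pi_1(M)$ and $\pi_2(M)$ are non-trivial, the key fact forces any $(x,y)\in C_P(M)$ to satisfy $x=1$ and $y=1$; so non-triviality of $C_P(M)$ forces $M\subseteq N_{3-i}$ for some $i$. Together with $N_1\cap N_2=1$ and the fullness hypothesis, this shows that $N_1$ and $N_2$ themselves are two distinct maximal such subgroups and that there are no others. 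This characterization is intrinsic to $P$, so $\phi$ bijects $\{N_1,N_2\}$ onto $\{N_1',N_2'\}$, and since the intersection of the summands is trivial on both sides, $\phi$ restricts to an isomorphism $N_1\times N_2\to N_1'\times N_2'$ carrying direct summands to direct summands.

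Assume, after relabelling if necessary, that $\phi(N_i)=N_i'$. The projection $\pi_i$ identifies $P/N_{3-i}$ with $\Gamma_i$ (and likewise for $P'$), so $\phi$ descends to an automorphism $\alpha_i\colon\Gamma_i\to\Gamma_i$. Set $\Phi:=\alpha_1\times\alpha_2$. For $(x,y)\in P$, reading off the two coordinates of $\phi(x,y)$ via $\pi_1$ and $\pi_2$ gives exactly $(\alpha_1(x),\alpha_2(y))=\Phi(x,y)$, so $\Phi$ extends $\phi$.

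For uniqueness, suppose $\Phi'$ is another extension and let $\Psi:=\Phi^{-1}\Phi'$, an automorphism of $\Gamma_1\times\Gamma_2$ fixing $P$ pointwise. For $g\in\Gamma_1$, write $\Psi(g,1)=(g',y')$; since $(g,1)$ commutes with every element of $\{1\}\times N_2\subseteq P$ and $\Psi$ fixes that subgroup pointwise, $y'\in C_{\Gamma_2}(N_2)=1$ by the key fact. Hence $\Psi$ preserves $\Gamma_1\times\{1\}$, and symmetrically $\{1\}\times\Gamma_2$, so $\Psi=\beta_1\times\beta_2$ for some $\beta_i\in\mathrm{Aut}(\Gamma_i)$; the surjectivity of $\pi_i|_P$ together with $\Psi|_P=\mathrm{id}$ forces $\beta_i=\mathrm{id}$. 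The main technical hurdle is the intrinsic characterization of $\{N_1,N_2\}$ in the first step, which is precisely where the hyperbolicity hypothesis enters; once the centralizer fact is in hand, the rest is bookkeeping.
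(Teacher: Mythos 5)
Your proof is correct and follows essentially the same strategy as the paper: characterize $N_1,N_2$ intrinsically via centralizer considerations in $P$, then construct $\Phi$ from the automorphisms of $\Gamma_i$ that $\phi$ induces on the quotients $P/N_{3-i}\cong\Gamma_i$. The paper phrases the characterization at the element level ($N_1\cup N_2$ is exactly the set of $x\in P$ whose centralizer in $P$ is non-abelian, using that a non-trivial normal subgroup of a torsion-free non-elementary hyperbolic group contains a non-abelian free group), whereas you phrase it at the subgroup level via the equivalent fact $C_\Gamma(N)=1$; the two are interchangeable here, and your uniqueness argument is actually spelled out more fully than the paper's, which only asserts uniqueness within $\mathrm{Aut}(\Gamma_1)\times\mathrm{Aut}(\Gamma_2)$. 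The one corner you cut is ``after relabelling if necessary, $\phi(N_i)=N_i'$'': in the swap case $\phi(N_1)=N_2'$, $\phi(N_2)=N_1'$ the paper explicitly produces isomorphisms $\tilde\phi_1:\Gamma_1\to\Gamma_2$ and $\tilde\phi_2:\Gamma_2\to\Gamma_1$ and sets $\Phi(a,b)=(\tilde\phi_2(b),\tilde\phi_1(a))$, which is still an automorphism of $\Gamma_1\times\Gamma_2$; your relabelling device is sound (the swap case can only occur if $\Gamma_1\cong\Gamma_2$), but it deserves a sentence.
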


\begin{proof} Let $N_i = P\cap \G_i$ and $N_i' = P'\cap \G_i$ and note that these are the kernels of the
coordinate projections restricted to $P$ and $P'$. Note too that $N_i$ and $N_i'$ are normal in $\G_i$ because,
for example, $N_1$ is normal in $P$ and the projection of $P$ onto $\G_1$ fixes $N_1$. A non-trivial normal
subgroup of a torsion-free, non-elementary hyperbolic group contains a non-abelian free group, so the centraliser in $P$ of any
$n\in N_1$ contains such a free group (since it contains $N_2$). On the other hand, elements of $P$ that
do not lie in $N_1\cup N_2$ are of the form $(\gamma_1, \gamma_2)$ with $\gamma_i\in\Gamma_i\smallsetminus\{1\}$,
and non-trivial elements of torsion-free hyperbolic groups have cyclic centralizers. Thus $N_1\cup N_2$
consists of precisely those $x\in P$ with non-abelian centralizer. And
$N_1'\cup N_2' \subset P'$ can be characterised similarly. It follows that every isomorphism $\phi:P\to P'$
sends $N_1\cup N_2$ bijectively to $N_1'\cup N_2'$, and therefore maps $N_1\times N_2$ isomorphically onto $N_1'\times N_2'$.
Further consideration of centralisers shows that $\phi$ sends direct factors to direct factors: 
 either $\phi(N_1)=N_1'$ and $\phi(N_2)=N_2'$ or else $\phi(N_1)=N_2'$ and $\phi(N_2)=N_1'$.

If $\phi(N_1)=N_1'$ and $\phi(N_2)=N_2'$, then the coordinate projections give us natural identifications
$$
P/N_2 = \G_1 = P'/N_2' \text{  and  } P/N_1 = \G_2 = P'/N_1',
$$
via which the maps $pN_2\mapsto \phi(p) N_2'$ and $pN_1\mapsto \phi(p) N_1'$ define the unique isomorphisms $\phi_1:\G_1\to \G_1$
and $\phi_2: \G_2\to\G_2$ such that $\Phi:=(\phi_1,\phi_2)\in {\rm{Aut}}(\G_1)\times {\rm{Aut}}(\G_2)$ restricts to $\phi:P\to P'$.

If $\phi(N_1)=N_2'$ and $\phi(N_2)=N_1'$ then instead we obtain isomorphisms $\tilde\phi_1:\G_1\to \G_2$
and $\tilde\phi_2: \G_2\to\G_1$ such that $\Phi:=\tilde\phi_1\times\tilde\phi_2\in {\rm{Aut}}(\G_1\times \G_2)$ restricts to $\phi:P\to P'$.
\end{proof}

\section{The first proof of Theorem \ref{main}}\label{s:proof}

We shall need the following elementary lemma.

\begin{lem}\label{l:grow} Let $G$ be a group, let $K\ns G$ be a normal subgroup and let $\phi:G\to G$ be an automorphism.
If $K\subsetneq \phi(K)$, then $\phi$ has infinite order in ${\rm{Out}}(G)$.
\end{lem}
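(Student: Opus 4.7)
The plan is a clean proof by contradiction: if $\phi$ had finite order in $\mathrm{Out}(G)$, then some positive power $\phi^n$ would be an inner automorphism, and inner automorphisms preserve every normal subgroup setwise, which clashes with iterating the strict inclusion $K\subsetneq \phi(K)$.

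First I would use the injectivity of $\phi$ to promote the hypothesis $K\subsetneq \phi(K)$ to an infinite strictly ascending chain. Applying $\phi$ to both sides of $K\subsetneq \phi(K)$ and using that an injective map preserves strict inclusion gives $\phi(K)\subsetneq \phi^2(K)$, and inductively
\[
K \subsetneq \phi(K)\subsetneq \phi^2(K)\subsetneq \cdots \subsetneq \phi^n(K)\subsetneq\cdots
\]
In particular $K\neq \phi^n(K)$ for every $n\geq 1$.

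Next I would record the simple observation that any inner automorphism $\mathrm{ad}_g:x\mapsto gxg^{-1}$ sends $K$ to $gKg^{-1}=K$, because $K\ns G$. Hence if $\phi^n$ is inner for some $n\geq 1$, then $\phi^n(K)=K$, directly contradicting the previous step.

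Finally, if $\phi$ had finite order in $\mathrm{Out}(G)$, then by definition some positive power $\phi^n$ would lie in $\mathrm{Inn}(G)$, which is precisely the situation ruled out above. Therefore $\phi$ has infinite order in $\mathrm{Out}(G)$. There is no real obstacle here — the only subtlety worth flagging is the use of injectivity of the automorphism to propagate strictness up the chain, which is immediate.
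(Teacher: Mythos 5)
Your proof is correct and follows essentially the same argument as the paper: both note that if some power $\phi^m$ with $m>0$ were inner, normality of $K$ would force $\phi^m(K)=K$, while iterating the strict inclusion $K\subsetneq\phi(K)$ gives $K\subsetneq\phi^m(K)$, a contradiction. The only difference is presentational — you spell out the ascending chain and the injectivity point that the paper leaves implicit.
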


\begin{proof} If $\phi^m$ were an inner automorphism for some $m>0$ then, since $K$ is normal, we would have
$\phi^m(K)=K$, whereas $K\subsetneq \phi(K)$ implies $K\subsetneq \phi^m(K)$ .
\end{proof}

We turn to the main argument.
Let $\mathcal{Q}$ be the aspherical presentation described in Proposition \ref{p:Q} and let 
$$1\to N\to \G\overset{\pi_0}\to Q\to 1$$
be the short exact sequence obtained by applying the Rips construction to it. Let $\Psi:Q\to Q$
be the epimorphism described in the proof of Proposition \ref{p:Q} and define $\pi_n = \pi_0\circ\Psi^n$.
Let $P_n< \G\times\G$ be the fibre product of the maps $\pi_0:\G\times\{1\}\to Q$ and $\pi_n:\{1\}\times \G\to Q$.
The kernel of $\pi_0$ is finitely generated, so we have all of the data required to apply Theorem \ref{t:123}.
Thus we obtain, in a recursive manner, finite presentations for the fibre products $P_n$. Theorem \ref{PT}
assures us that the inclusion $P_n\hookrightarrow\G\times\G$ induces an isomorphism of profinite completions.
Thus the following claim completes the proof of Theorem \ref{main}.

\smallskip

\noindent{\em{Claim:}} $P_n\cong P_m$ if and only if $m=n$.

\smallskip

The intersection of $P_n$ with $\G\times\{1\}$ is $\ker\pi_0$ while its intersection with $\{1\}\times\G$
is $\ker\pi_n$. Thus $P_n$ contains the subgroup  $K_n :=  \ker\pi_0\times \ker\pi_n$, which is normal in $\G\times\G$.
Note that $K_n\subsetneq K_m$ if $m>n$. 

Lemma \ref{l:fp} tells us that any isomorphism $\phi: P_n\to P_m$ is the restriction to $P_n$ of an automorphism $\Phi$ of $\G\times\G$.
The automorphism group of $\G\times\G$ contains ${\rm{Aut}}(\G)\times{\rm{Aut}}(\G)$ as a subgroup of index 2, and
Lemma \ref{l:BH} tells us that the group of inner automorphisms has finite index in this. In particular, $\Phi$ has
finite order in the outer automorphism group of $\G\times\G$. But then $\Phi(K_n)=K_m$, by Lemma \ref{l:fp}, which
contradicts Lemma \ref{l:grow} unless $m=n$. This completes the proof of Theorem \ref{main}.
\qed

\section{Pro-nilpotent equivalences}\label{s:nilp}

The {\em{pro-nilpotent}} completion of a group $G$ is the inverse limit of its system of nilpotent quotients; equivalently it is 
the inverse limit of the sequence $G/G_c \to G/G_{c+1}$ where $G_c$ is the $c$-th term of the lower central series of $G$.
If a homomorphism of finitely generated groups  induces an isomorphism of profinite completions, then it induces
an isomorphism of pro-nilpotent completions (\cite{BR}, Proposition 3.2). Thus Addendum \ref{addend} will be proved
if we can arrange for the group $\G$ of the previous section to be residually torsion-free-nilpotent. Proposition \ref{rips1}(5)
assures us that we can do so.

\section{Nielsen Equivalence and $T$-equivalence}\label{s:niel}

The proof of Theorem \ref{main} presented in Section \ref{s:proof} is an implementation of the following naive idea: if one
has a group $Q$ of type $F_3$ and an infinite family of epimorphisms from finitely presented groups $\pi_0:\G\to Q$ and $\pi_n:G\to Q$,
where $\pi_0$ has a non-trivial finitely generated kernel and the $\pi_n$ are ``truly inequivalent" then one expects the fibre products $P_n<\G\times G$ of pairs $(\pi_0,\pi_n)$
to be non-isomorphic. The most direct way in which one might try to implement this strategy is to let $G$ be a free group and take the
$\pi_n$ to be Nielsen-inequivalent choices of generating sets, but this approach is fraught with technical difficulties. In this
section we consider an alternative implementation of the naive strategy that takes up the idea of Nielsen equivalence more directly than our
first proof of Theorem \ref{main}, providing us with different examples.

Let $F$ be a free group with ordered basis $\{x_1,\dots,x_n\}$ and let $G$ be a group. Ordered generating sets
$\Sigma=\{s_1,\dots,s_n\}\subset G$ of cardinality $n$ correspond to epimorphisms $q_\S:F\onto G$;
one defines $q_\S(x_i)=s_i$. The 
automorphism groups ${\rm{Aut}}(F)$ and ${\rm{Aut}}(G)$ act on
the set of such epimorphisms by pre-composition and post-composition, respectively. These actions
commute. By definition, $\Sigma=\{s_1,\dots,s_n\}$ and $\Sigma'=\{s'_1,\dots,s_n'\}$
(or $q_\Sigma$ and $q_{\Sigma'}$) are {\em Nielsen equivalent} if
they lie in the same ${\rm{Aut}}(F)$ orbit, and {\em $T$-equivalent}  if they lie in the same orbit under the
action of ${\rm{Aut}}(F)\times{\rm{Aut}}(G)$. In other words, they are $T$-equivalent if 
there are automorphisms $\phi:F\to F$ and $\psi: G\to G$ making the following diagram commute:
$$
\begin{matrix}
&F&\overset{\phi}\longrightarrow& 
F&\cr
&\ {\bigg\downarrow} {q_\S}&&{\bigg\downarrow} {q_{\S'}}\cr
&G
&\overset{\psi}\longrightarrow&
G
&\cr
\end{matrix}
$$ 
 There is a considerable literature on Nielsen equivalence but
it  is a notoriously difficult invariant to compute and little is known when $n>2$. For 2-generator, 1-relator
groups, the situation is better understood.  

\begin{exa}\label{ex:brunner}
Let $S=\< a, t \mid ta^2t^{-1}=a^3\>$. Because $a= a^3a^{-2} = ta^2t^{-1}a^{-2}$,
for every positive integer $n>0$,
the 2-element set $\S_n=\{t, a^{2^n}\}$ generates $G$. Brunner \cite{brunner} proves that $\S_n$ is
not $T$-equivalent to $\S_m$ if $n\neq m$. 
\end{exa}

In Section \ref{s:hopf} we considered the group
$$
B = \<a_1, t_1, a_2, t_2 \mid t_1a_1^2t_1^{-1}a_1^{-3},\, t_2a_1^2t_2^{-1}a_2^{-3},\, t_2^{-1}[a_1,t_1a_1t_1^{-1}],\, 
t_1^{-1}[a_2,t_2a_2t_2^{-1}]\>,
$$
which is an amalgam of the form $S\ast_ LS$ with $L$ free of rank $2$.
Although Nielsen equivalence behaves well with respect to free products \cite{weid}, it does not
behave well with respect to amalgamated free products, so there is no obvious way of adapting
the generating sets in Example \ref{ex:brunner} so as to produce an infinite sequence of $T$-inequivalent
generating sets for $B$.  To circumvent this problem, we pass from consideration of maps from the
free group of rank $4$ to $B$
to consideration of maps $\Lambda\to B$, where $\Lambda$ is obtained from the free group of rank $4$ taking a trivial HNN extension that distinguishes
a free factor $F$ of rank $2$.

Let $\Lambda = \< \a_1, \t_1, \a_2, \t_2, \zeta \mid [\a_1,\zeta]= [\t_1,\zeta]=1\>$ and let $F=\<\a_1,\t_1\> < \Lambda$.

\begin{lem}\label{l:autL} For every automorphism $\phi:\Lambda\to \Lambda$
there exists $l\in\Lambda$ so that
 $\ad_l\circ\phi$ sends $\zeta$ to $\zeta^{\pm 1}$ and restricts to an automorphism of $C_\L(\zeta) = F\times\<\zeta\>$.
 \end{lem}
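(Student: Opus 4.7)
The plan is to recognise $\L$ as a free product and then analyse centralisers with Bass--Serre theory. The defining presentation makes it plain that $\L=A\ast H$, where $A=\<\a_2,\t_2\>$ is free of rank $2$ and $H=\<\a_1,\t_1,\zeta\mid[\a_1,\zeta]=[\t_1,\zeta]=1\>$ is canonically $F\times\<\zeta\>\cong F_2\times\Z$. Since $\zeta$ is central in the vertex group $H$, and the centraliser of a non-trivial element of a vertex group in a free product is contained in that vertex group, one gets $C_\L(\zeta)=H=F\times\<\zeta\>$ at once.

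The heart of the argument is a classification of the non-abelian centralisers of non-trivial elements of $\L$. Acting on the Bass--Serre tree of $\L=A\ast H$, such an element is either hyperbolic, in which case its centraliser is infinite cyclic, or else elliptic and hence conjugate into $A$ or into $H$. Centralisers in $A=F_2$ are cyclic; and in $H$ one computes $C_H(f,\zeta^k)=C_F(f)\times\<\zeta\>$, which is $\Z^2$ when $f\neq 1$ and equals $H$ when $f=1$. Since $F=F_2$ is non-abelian, the only non-abelian centralisers of non-trivial elements of $\L$ are the conjugates of $H=C_\L(\zeta)$.

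To finish, observe that $\phi(C_\L(\zeta))=C_\L(\phi(\zeta))$ is isomorphic to $F_2\times\Z$ and is therefore non-abelian, so by the classification above there exists $l\in\L$ with $l\,\phi(H)\,l^{-1}=H$. Then $\ad_l\circ\phi$ restricts to an automorphism of $H=F\times\<\zeta\>$, and this restriction must preserve the centre $\<\zeta\>$ of $H$. Since $\aut(\Z)=\{\pm 1\}$, we conclude that $\ad_l\circ\phi(\zeta)=\zeta^{\pm 1}$, as required.

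The only step demanding real care is the centraliser classification in the middle paragraph; but once the free product decomposition $\L=F_2\ast(F_2\times\Z)$ is in hand, this is routine Bass--Serre book-keeping, using that $A\cap B=1$ and distinct conjugates of vertex groups intersect trivially in a free product.
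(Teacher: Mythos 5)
Your proof is correct. It differs from the paper's chiefly in the choice of decomposition: the paper views $\Lambda$ as a trivial HNN extension of the free group $F_4$ over $F=\<\a_1,\t_1\>$ and computes centralisers via HNN normal forms, whereas you split $\Lambda$ as the free product $F_2\ast(F_2\times\Z)$ and use Bass--Serre theory for the free product. Both routes classify the non-abelian centralisers and reach the same conclusion, namely that they are exactly the conjugates of $C_\L(\zeta)=F\times\<\zeta\>$. Your free product viewpoint makes the centraliser book-keeping arguably more transparent, and it also streamlines the endgame: rather than invoking (as the paper does) the maximality of $\<\zeta\>$ among cyclic subgroups to show $\phi(\zeta)$ is conjugate to $\zeta^{\pm1}$ and then deducing preservation of the centraliser, you first obtain that $\ad_l\circ\phi$ restricts to an automorphism of $H=F\times\<\zeta\>$ and then read off $\zeta\mapsto\zeta^{\pm1}$ from the fact that automorphisms preserve the centre $Z(H)=\<\zeta\>$. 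Both arguments are sound; yours slightly reorders the logic and packages the normal-form computation as a standard Bass--Serre centraliser calculation.
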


\begin{proof}
A simple calculation with HNN normal forms shows that
the only elements $\lambda\in \L$ whose centraliser $C_\L(\lambda)$ contains a non-abelian free group are the conjugates of 
powers of $\zeta$. And $\<\zeta\>$ is maximal among cyclic subgroups of $\Lambda$ (as one can see by abelianising, for example). So $\phi(\zeta) = l\zeta^{\pm 1}l^{-1}$ for some $l\in\Lambda$, which implies that  
$\ad_l\circ\phi$ preserves $C_\L(\zeta) = F\times\<\zeta\>$.
\end{proof}

\begin{lem}\label{l:NoGo}
 Let $Q$ be a group, let $G<Q$ be a subgroup with trivial centralizer,
  let $q, q' : \L\onto Q$ be epimorphisms with kernels $N$ and $N'$, and 
suppose that $q(F)=q'(F)=G$. If $q|_F$ and $q'|_F$  are not $T$-equivalent, 
then there is no automorphism $\phi : \L\to \L$  with $\phi(N)=N'$.
\end{lem}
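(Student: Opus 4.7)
The plan is to prove the contrapositive: assume an automorphism $\phi:\Lambda\to\Lambda$ with $\phi(N)=N'$ exists, and manufacture from it automorphisms of $F$ and of $G$ that witness the $T$-equivalence of $q|_F$ and $q'|_F$. The first preliminary observation is that $\zeta\in N\cap N'$: since $\zeta$ centralises $F$, its image under $q$ (resp.\ $q'$) centralises $q(F)=q'(F)=G$, and $G<Q$ has trivial centraliser by hypothesis, so $q(\zeta)=q'(\zeta)=1$.

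Next I would use Lemma \ref{l:autL} to normalise $\phi$. Choose $l\in\Lambda$ so that $\phi':=\ad_l\circ\phi$ sends $\zeta$ to $\zeta^{\pm1}$ and restricts to an automorphism of $C_\Lambda(\zeta)=F\times\langle\zeta\rangle$. Since $N'$ is normal in $\Lambda$, we still have $\phi'(N)=lN'l^{-1}=N'$, so $\phi'$ descends to an automorphism $\psi:Q\to Q$ satisfying $q'\circ\phi'=\psi\circ q$. Replacing $\phi$ by $\phi'$, we may assume these properties hold.

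Because $\phi$ preserves the centre $\langle\zeta\rangle$ of $F\times\langle\zeta\rangle$, passing to the quotient $(F\times\langle\zeta\rangle)/\langle\zeta\rangle\cong F$ yields an automorphism $\bar\phi:F\to F$. Concretely, for every $f\in F$ one has $\phi(f)=\bar\phi(f)\,\zeta^{k(f)}$ for some integer $k(f)$. Since $\zeta\in N'$, applying $q'$ gives
$$
q'(\bar\phi(f)) \;=\; q'(\phi(f)) \;=\; \psi(q(f)) \qquad (f\in F).
$$
Taking images over $f\in F$, the left-hand side equals $q'(\bar\phi(F))=q'(F)=G$ because $\bar\phi$ is an automorphism of $F$, while the right-hand side equals $\psi(q(F))=\psi(G)$. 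Hence $\psi(G)=G$, so $\bar\psi:=\psi|_G$ is an automorphism of $G$, and the displayed identity above says exactly that $\tilde q'\circ\bar\phi=\bar\psi\circ\tilde q$, with $\tilde q=q|_F$ and $\tilde q'=q'|_F$. This is $T$-equivalence, contradicting the hypothesis.

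The main technical point to get right is the reduction step using Lemma \ref{l:autL}: one must check that conjugating $\phi$ by $l$ preserves the property $\phi(N)=N'$ (which uses normality of $N'$, not of $N$) and that the resulting $\phi$ genuinely stabilises the subgroup $F\times\langle\zeta\rangle$ so that a quotient automorphism $\bar\phi$ of $F$ can be extracted. Once this is in place, the conclusion $\psi(G)=G$ is essentially forced by the identity $q'\circ\bar\phi=\psi\circ q|_F$ together with surjectivity of $q|_F$ onto $G$, and the rest is formal.
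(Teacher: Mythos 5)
Your proposal is correct and follows essentially the same route as the paper's proof: the same preliminary observation that $q(\zeta)=q'(\zeta)=1$, the same use of Lemma \ref{l:autL} to normalise $\phi$ so that it preserves $F\times\langle\zeta\rangle$, and the same extraction of compatible automorphisms of $F$ and of $Q$ (hence $G$) to contradict the $T$-inequivalence hypothesis. The only difference is presentational — you carry out the replacement $\phi\leadsto\ad_l\circ\phi$ explicitly (checking $\phi'(N)=N'$ via normality of $N'$) and write the factorisation $\phi(f)=\bar\phi(f)\zeta^{k(f)}$ in multiplicative form, whereas the paper encapsulates the same bookkeeping in a single commutative diagram.
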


\begin{proof}  
By definition, $\zeta$ commutes with $F$ and we are assuming that $q(F)=q'(F)=G$  has trivial centralizer,
so $q(\zeta)=q'(\zeta)=1$ and both $q$ and $q'$ factor through the retraction
$\rho:{F\times\{\zeta\}}\to F$.

Towards a contradiction, suppose that there is an automorphism $\phi:\L\to\L$ such that $\phi(N)=N'$.
Then $q(x)\mapsto q'(\phi(x))$ defines an automorphism  $\overline\phi:Q\to Q$.
Lemma \ref{l:autL} tells us that there is an element $l\in \L$ so that the inner automorphism ${\rm{ad}}_l$
conjugates $\phi(F\times\<\zeta\>)$ to $ F\times\<\zeta\>$. Thus, we obtain the following commutative diagram
$$  
\begin{matrix}
&F&\longhook&
F\times\<\zeta\>&\overset{{\phi}}\longrightarrow&
\phi(F\times\<\zeta\>)&\overset{\ad_l}\longrightarrow&
F\times\<\zeta\>&\overset{\rho}\longrightarrow&
F\cr
&\ {\bigg\downarrow} {q}&&{\bigg\downarrow} {q}
&& {\bigg\downarrow} {q'}&&{\bigg\downarrow} {q'}
&& {\bigg\downarrow} {q'}
\cr
&G&\overset{{\rm{id}}}\longrightarrow&
G&\overset{\overline{\phi}}\longrightarrow&
\overline{\phi}(G)&\overset{\ad_{q'(l)}}\longrightarrow&
G&\overset{{\rm{id}}}\longrightarrow&
G.
\end{matrix}
$$
This diagram
shows  that  $q|_F$ and $q'|_F$ are $T$-equivalent, contrary to hypothesis.
\end{proof} 

The vertex groups in the decomposition 
$B=S\ast_ LS$ are centreless, and each is a maximal elliptic subgroup (in the sense of Bass-Serre theory), therefore
each has trivial centralizer in $B$. Thus we may apply the preceding lemma to maps $\Lambda\to B$ with
the first factor $S$ in the role of $G$. 

Working with the presentations of $\Lambda$ and $B$ displayed above, we define
$ q_n: \Lambda\to B$ by setting 
$$ q_n(\alpha_i)=a_i^{2^n},\ \ q_n(\tau_i)=t_i,\ \ q_n(\zeta)=1.$$

\begin{cor}\label{c:noWay} If $n\neq m$, there is no automorphism $\phi:\Lambda\to\Lambda$
such that $\phi(\ker q_n) = \ker q_m$.
\end{cor}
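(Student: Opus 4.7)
The plan is to reduce Corollary \ref{c:noWay} directly to Lemma \ref{l:NoGo} by taking $Q=B$ and $G=S_1$, the first vertex group in the amalgamated decomposition $B=S_1\ast_L S_2$. The structural hypotheses of Lemma \ref{l:NoGo} are essentially already in place: the remark immediately preceding Corollary \ref{c:noWay} notes that each vertex group of $B$ has trivial centraliser in $B$, so $G=S_1$ has trivial centraliser; and the formulas $q_n(\alpha_1)=a_1^{2^n}$, $q_n(\tau_1)=t_1$, $q_n(\alpha_2)=a_2^{2^n}$, $q_n(\tau_2)=t_2$, $q_n(\zeta)=1$ show that $q_n$ and $q_m$ are both epimorphisms $\L\onto B$ (they hit both vertex groups of $B$), while $q_n(F)=\langle t_1,a_1^{2^n}\rangle$ equals all of $S_1$ by the Baumslag--Solitar identity $a_1=t_1a_1^{2}t_1^{-1}a_1^{-2}$ together with a short descent on powers of $2$. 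Thus $q_n(F)=q_m(F)=S_1=G$, and all the hypotheses of Lemma \ref{l:NoGo} are satisfied except possibly the $T$-inequivalence of $q_n|_F$ and $q_m|_F$.

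The remaining (and only substantive) task is therefore to show that the restrictions $q_n|_F:F\onto S_1$ and $q_m|_F:F\onto S_1$ are not $T$-equivalent for $n\neq m$. Under the identification of $F=\langle\alpha_1,\tau_1\rangle$ with the free group of rank $2$ and $S_1$ with $S$, these two epimorphisms are precisely the ones corresponding to the ordered generating sets $\Sigma_n=\{t_1,a_1^{2^n}\}$ and $\Sigma_m=\{t_1,a_1^{2^m}\}$ of $S$. By Brunner's theorem (Example \ref{ex:brunner}), these generating sets are $T$-inequivalent in $S$ whenever $n\neq m$. Since $T$-equivalence is defined purely in terms of the epimorphism $F\to S$, this is exactly the statement we need. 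Lemma \ref{l:NoGo} then delivers the conclusion: no automorphism $\phi:\L\to\L$ carries $\ker q_n$ to $\ker q_m$.

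The main obstacle, in so far as there is one, lies entirely in citing Brunner's theorem; everything else is bookkeeping to check that the hypotheses of Lemma \ref{l:NoGo} are met. The role of $\zeta$ in $\Lambda$ is crucial here: it forces $q_n$ and $q_m$ to factor through the retraction $F\times\langle\zeta\rangle\to F$ (because $\zeta$ commutes with $F$ and $G=S_1$ is centraliser-free in $B$), and it is precisely this factorisation that allows the lemma to convert an automorphism of $\Lambda$ exchanging the kernels into a $T$-equivalence of the restricted maps. Combining these observations, the corollary follows immediately.
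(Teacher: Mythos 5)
Your proposal is correct and follows essentially the same route as the paper: identify $Q=B$, $G=S_1$ in Lemma \ref{l:NoGo}, note the trivial-centraliser property of the vertex groups, and invoke Brunner's theorem (Example \ref{ex:brunner}) for the $T$-inequivalence of $q_n|_F$ and $q_m|_F$. The paper states this more tersely (it cites Example \ref{ex:brunner} and leaves the application of Lemma \ref{l:NoGo} implicit, having set up the choice of $G=S_1$ in the remark just above the corollary), whereas you spell out the verifications of surjectivity and of $q_n(F)=S_1$; this is the same argument, merely with the bookkeeping made explicit.
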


\begin{proof} 
The discussion in Example \ref{ex:brunner} shows that each $q_n$ is surjective and that the restriction of
$q_n$ to $F=\<\alpha_1,\tau_1\>$ is T-equivalent to the restriction of $q_m$ if and only if $m=n$.
\end{proof}
   
\section{A second proof of Theorem \ref{main}}
\def\B{\mathcal{B}}
\def\N{\mathbb{N}}

We apply the Rips construction  to the finite presentation of $B$ given above to obtain a short exact sequence $1\to N \to \G\overset{\pi_0}\to B\to 1$
satifying the conditions of Proposition \ref{rips1}. As $\G$ is special, it is a subgroup of a right-angled Artin group (RAAG). $\Lambda$
itself is a RAAG, and hence $\G\times\Lambda$ is special; in particular it is residually-finite and residually torsion-free-nilpotent.
$B$ is given by a finite aspherical presentation and it has no non-trivial finite quotients so, as in Section \ref{s:proof}, 
we will be done if we can prove that the fibre products
$$
\Pi_n = \{(x,y) \mid \pi_0(x)=q_n(y)\} < \G\times\Lambda
$$ associated to the epimorphisms $q_n:\Lambda \to B$ from Corollary \ref{c:noWay}
have the property that $\Pi_n\not\cong\Pi_m$ if $n\neq m$.

By arguing with centralizers, as in the proof of Lemma \ref{l:fp}, one sees that
every  isomorphism $\phi:\Pi_n\to \Pi_m$ is the restriction of an 
ambient automorphism  $(\phi_1,\phi_2)\in\aut(\Gamma)\times\aut(\Lambda)$ with
$$\phi_2(\Pi_n\cap \Lambda) = \Pi_m\cap\Lambda.$$
(Here we have taken account of the fact that $\G$ and $\Lambda$ are not isomorphic.)
But $\Pi_i\cap\Lambda = \ker q_i$, so this contradicts Corollary \ref{c:noWay} unless $n=m$. \qed
        
\noindent\textit{Acknowledgments.}
The author is supported by the EPSRC and by a Wolfson Research Merit Award from the
Royal Society. He thanks these organisations for their support.

\end{document}